\newcommand{\numberset}[1]{\ensuremath{\mathbb{#1}}} 
\newcommand{\calm}[1]{\mathcal{#1}}
\newcommand{\N}{\numberset{N}} 
\newcommand{\R}{\numberset{R}} 
\newcommand{\C}{\numberset{C}} 
\newcommand{\Sp}[1]{\ensuremath{\mathbb{S}^{#1}}}
\newcommand{\D}{\mathbb{D}}
\newcommand{\BD}[1]{\mathbf{D}^{#1}}
\newcommand{\pr}[1]{#1^{-1}}
\newtheorem{thm}{Theorem}[section]
\newtheorem{prop}[thm]{Proposition}
\newtheorem{cor}[thm]{Corollary}
\theoremstyle{definition}
\newtheorem{exa}[thm]{Example}
\newcommand{\ie}{\textit{i.e.},\xspace}
\begin{document}
\title[Cyclic branched covers as links of singularities]{Cyclic branched covers of knots as links of real isolated singularities}
\author[H. Aguilar-Cabrera]{Hayd\'ee Aguilar-Cabrera}
\address{Instituto de Ci\^encias Matem\'aticas e de Computa\c{c}\~ao\\
Universidade de S\~{a}o Paulo\\ 
Avenida Trabalhador são-carlense, 400\\
Centro, CEP 13566-590\\
S\~{a}o Carlos, S\~{a}o Paulo, Brazil\\
}
\email{aguilarcabrerah@gmail.com}
\date{November 5, 2013}
\thanks{Research partially supported by CONACyT grant ``Estancias Posdoctorales y Sab\'aticas al Extranjero para la Consolidaci\'on de Grupos de Investigaci\'on''. It is greatly appreciated the hospitality of the Department of Mathematics of Columbia University and the Instituto de Matem\'aticas, Unidad Cuernavaca of the Universidad Nacional Aut\'onoma de M\'exico.}
\keywords{Real singularities, Milnor fibration, Figure-eight knot, Hyperbolic manifold, Hantzsche-Wendt manifold}
\subjclass[2010]{Primary 32S55; Secondary 57M10,57M25,57Q45,26C99}

\begin{abstract}
Given a real analytic function $f$ from $\R^4$ to $\R^2$ with isolated critical point at the origin, the link $L_f$ of the singularity is a real fibred knot in $\Sp{3}$. From this singularities, we construct a family of real isolated suspension singularities from $\mathbb{R}^6$ to $\mathbb{R}^2$ such that its links are the total spaces of the $n$-branched cyclic coverings over the corresponding knots. In this way we obtain as links of singularities, $3$-manifolds that does not appear in the complex case, such as hyperbolic $3$-manifolds or the Hantzsche-Wendt manifold.
\end{abstract}

\maketitle

\section{Introduction}

In the study of the topology of singularities there is an interaction of several branches of mathematics such
as knot theory, low dimensional topology, etc.

In the case of complex singularities, it is known that the $3$-manifolds that appear in the study of the topology of these singularities are of a special type: graph manifolds. Graph manifolds were defined and classified by Waldhausen in his thesis \cite{MR0236930} and his work together with Grauert's, Mumford's and others, gives an accurate description of what manifolds are links of complex singularities. There is also an algorithmic form of this description made by Neumann in \cite{Neu:calcplumb}.

Moreover, the $3$-manifolds that appear as links of complex singularities are always prime and a natural decomposition of prime $3$-manifolds is the JSJ-decomposition. From this point of view, a graph manifold is simply a $3$-manifold that only have Seifert-fibred JSJ-pieces. Let us recall that the relevant geometry for simple non-Seifert-fibred pieces is the hyperbolic geometry, \ie $3$-manifolds that appear as links of complex singularities are manifolds that have not hyperbolic pieces in their geometric decomposition.

In this article we show how to construct links of real isolated singularities, that are $3$-manifolds, as cyclic covers of real algebraic fibred knots. In particular we present a family of hyperbolic manifolds as links of real singularities. We do this generalising the tools used in \cite{2012arXiv1211.5103A}.

\section{Real fibred knots}

Let $f \colon (\R^4,0) \to (\R^2,0)$ be a real analytic function with isolated critical point at the origin and let $L_f = \pr{f}(0) \cap \Sp{3}_{\varepsilon}$ be the link of the singularity, where $\Sp{3}_{\varepsilon}$ is a sphere centred at the origin, of radius $\varepsilon > 0$, with $\varepsilon$ small enough.

By the Milnor fibration theorem in its real version (\!\!\cite[Theorem~11.2]{milnor:singular}), there exists a $C^{\infty}$-locally trivial fibration
\begin{equation*}
\varphi_f \colon \Sp{3}_{\varepsilon} \setminus L_f \to \Sp{1} \ ,
\end{equation*}
\ie the link $L_f$ is a real fibred knot. Let us see an example.

\begin{exa}[Figure-eight knot]
In \cite{MR643562}, Perron proved the following result:

\begin{prop}
Let $f \colon (\R^4,0) \to (\R^2,0)$ be the polynomial map
\begin{equation*}
f(x_1,x_2,x_3,x_4)= \left(x_3 \rho^2+x_1(8x_1^2-2 \rho^2),\sqrt{2}x_4x_1+x_2(8x_1^2-\rho^2)\right) \ ,
\end{equation*}
where $\rho=x_1^2+x_2^2+x_3^2+x_4^2$ and let $g \colon (\R^4,0) \to (\R^2,0)$ be the polynomial map
\begin{equation*}
g(x_1,x_2,x_3,x_4)= f(x_1,x_2,x_3^2-x_4^2,2x_3x_4) \ .
\end{equation*}
Then $f$ and $g$ have isolated critical point at the origin and the figure-eight knot is the link of the singularity of $\pr{g}(0)$ at the origin.
\end{prop}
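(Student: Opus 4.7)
The plan is to prove the proposition in two stages: first establish that both $f$ and $g$ have isolated critical points at the origin, then identify the link $L_g$ with the figure-eight knot.

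For the isolated critical point of $f$, one computes the $2\times 4$ Jacobian matrix $Df$ explicitly and inspects its $2\times 2$ minors. The explicit polynomial form, with leading cubic terms in $x_1$ of the Chebyshev type $8x_1^3-2x_1\rho^2$, should allow one to conclude that the only common real zero near $0$ is the origin, after a case distinction based on whether $x_1$ vanishes. For $g=f\circ h$ with $h(x_1,x_2,x_3,x_4)=(x_1,x_2,x_3^2-x_4^2,2x_3x_4)$, the chain rule gives $Dg_p = Df_{h(p)}\cdot Dh_p$. The map $h$ has critical set exactly $\{x_3=x_4=0\}$ and satisfies $h^{-1}(0)=\{0\}$, so off the critical locus of $h$ the argument reduces to that for $f$. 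On $\{x_3=x_4=0\}$ away from the origin, a direct calculation of the $2\times 2$ submatrix of $Df$ in the $(x_1,x_2)$-directions shows its determinant is a definite quadratic form in $(x_1^2,x_2^2)$, hence nonzero, completing the argument for $g$.

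The heart of the argument is recognising $L_g$ as the figure-eight knot. The key structural observation is that $h$ acts on the $(x_3,x_4)$ factor as the complex squaring map $w\mapsto w^2$ (identifying $w=x_3+ix_4$), and hence realises a two-fold covering of $\R^4$ branched along $\{x_3=x_4=0\}$. Topologically, after radial normalisation, this descends to a two-fold branched cover $\Sp{3}\to\Sp{3}$ along the unknot $\{x_3=x_4=0\}\cap\Sp{3}$, so $L_g$ is isotopic to the preimage of $L_f$ under this cover. Thus my first sub-step would be to identify $L_f$ explicitly: on $\rho=1$ the defining equations reduce to $x_3=2x_1-8x_1^3$ and $\sqrt{2}x_1x_4+x_2(8x_1^2-1)=0$. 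A trigonometric parametrisation exploiting the Chebyshev identity $4\sin^3 t-3\sin t=-\sin 3t$ should exhibit $L_f$ as an explicit closed curve, plausibly lying on some Clifford torus in $\Sp{3}$. The second sub-step lifts this curve along the double cover and identifies the resulting knot as the figure-eight, either by reducing to a recognisable planar diagram, by computing the Alexander polynomial $\Delta(t)=t^2-3t+1$, or by matching the Milnor monodromy with the standard figure-eight monodromy on a once-punctured torus.

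The main obstacle is precisely this final recognition: translating the algebraic preimage into a recognisable presentation of the figure-eight knot. Once $L_f$ has a clean parametrisation the lifting is formal, but extracting from the lifted parametric curve a rigorous isotopy to the standard figure-eight (or a sufficiently strong distinguishing invariant) requires careful real-analytic work, and is where the essential content of Perron's construction lies. The remaining verification steps are routine once this identification is in place.
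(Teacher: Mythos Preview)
The paper does not prove this proposition at all: it is quoted verbatim as a result of Perron, introduced by ``In \cite{MR643562}, Perron proved the following result,'' and no argument is supplied. There is therefore nothing in the paper to compare your proposal against.

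Your outline is a reasonable reconstruction of how such a proof would go, and in particular the structural observation that $h$ acts as complex squaring on the $(x_3,x_4)$ factor, so that $L_g$ is obtained from $L_f$ by a two-fold cover of $\Sp{3}$ branched along an unknot, is indeed the mechanism Perron exploits. Your own diagnosis is accurate: the Jacobian computations for the isolated critical points are routine, and the substantive content lies entirely in the final recognition step, which you have not carried out. As written, the proposal is a correct plan rather than a proof; to complete it you would need either to reproduce Perron's explicit isotopy or to compute a distinguishing invariant of the lifted curve, neither of which is done here or in the paper under review.
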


In his review of \cite{MR643562}, Rudolph gives a different polynomial map with isolated critical point such that the figure-eight knot is its link (see \cite{Rud82}):

\begin{prop}\label{polRudolph}
Let $f \colon (\C^2,0) \cong (\R^4,0) \to (\C,0) \cong (\R^2,0)$ be the real analytic polynomial map given by
\begin{equation*}
f(z_1,z_2)= z_2^3 -3(x_1^2+x_2^2)(1+i x_2)z_2 -2x_1 \ ,
\end{equation*}
where $z_1=x_1+i x_2$; and let $g \colon (\C^2,0) \cong (\R^4,0) \to (\C,0) \cong (\R^2,0)$ be the real analytic polynomial
\begin{equation*}
g(z_1,z_2)= f(z_1^2,z_2) \ .
\end{equation*}
Then, $g$ has isolated critical point at the origin and the figure-eight knot is the link $L_g$ of the singularity of $\pr{g}(0)$ at the origin.
\end{prop}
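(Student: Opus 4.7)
The plan is to verify the two claims separately, using the factorisation $g = f \circ \phi$ where $\phi \colon \R^4 \to \R^4$ is the map $\phi(z_1, z_2) = (z_1^2, z_2)$.

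For the isolated critical point, the chain rule gives $Dg_p = Df_{\phi(p)} \cdot D\phi_p$. The Jacobian $D\phi$ is non-singular away from $\{z_1 = 0\}$, and $Df$ has full rank $2$ in a punctured neighbourhood of the origin by the preceding proposition. Thus $Dg$ has rank $2$ at every point $p$ near the origin with $z_1(p) \neq 0$ (since $\phi(p) \neq 0$ whenever $p \neq 0$, so $Df_{\phi(p)}$ is in the locus where $Df$ is known to have full rank). On $\{z_1 = 0\} \setminus \{0\}$, direct substitution gives $g(0, z_2) = z_2^3$; computing the $2 \times 4$ real Jacobian of $g$ at $(0, z_2)$ with $z_2 \neq 0$ shows that its minor in the $z_2$-coordinates has determinant $9|z_2|^4 \neq 0$, so the rank is $2$. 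Combining, the origin is the only critical point of $g$ in a small neighbourhood.

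For the link identification, writing $z_1 = x_1 + i x_2$ and expanding $g(z_1, z_2) = f(z_1^2, z_2)$, the equation $g = 0$ becomes a monic cubic in $z_2$:
\begin{equation*}
z_2^3 - 3(x_1^2 + x_2^2)^2 (1 + 2 i x_1 x_2) z_2 - 2(x_1^2 - x_2^2) = 0 \ .
\end{equation*}
For each $z_1 \neq 0$ this has three roots in $z_2$, so restricting to $\Sp{3}_{\varepsilon}$ presents $L_g$ as a closed $3$-braid parametrised by $z_1$ along a small circle in the $z_1$-plane (with the $z_2$ strands tracking the three roots). I would compute the braid word explicitly by tracking the monodromy of the three roots as $z_1$ traverses this circle, and then identify the closure with the figure-eight knot either by matching with its standard $3$-braid form $(\sigma_1 \sigma_2^{-1})^2$, or equivalently by computing the Alexander polynomial of the resulting closure and comparing with $t^2 - 3t + 1$.

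The main obstacle is the concrete braid identification, which is the substance of Rudolph's argument in \cite{Rud82}. The polynomial $f$ is engineered so that the discriminant of the depressed cubic $z_2^3 - 3A z_2 - 2B$ (with $A = |z_1|^2(1 + i x_2)$ and $B = x_1$) has the precise geometry needed, and the substitution $z_1 \mapsto z_1^2$ realises a twofold branched covering in the $z_1$-factor whose effect on the closed braid produces the figure-eight. I would close by invoking Rudolph's detailed analysis for the final identification of the closure as the figure-eight knot.
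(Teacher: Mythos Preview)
The paper does not give its own proof of this proposition: it is quoted, without argument, as a result from Rudolph's review \cite{Rud82}. So there is nothing in the paper to compare your outline against; the proposition functions here purely as a citation providing a concrete real-analytic realisation of the figure-eight knot.

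Evaluated on its own terms, your proposal has two genuine gaps. First, in the isolated-critical-point argument you assert that $Df$ has full rank in a punctured neighbourhood of the origin ``by the preceding proposition''. But the preceding proposition in the paper is Perron's result, which concerns a completely different polynomial $f$; nothing in the statement you are proving asserts that Rudolph's $f$ has an isolated critical point, and you have not verified it. Your chain-rule reduction therefore rests on an unjustified hypothesis. (Your direct check along $\{z_1=0\}$ is fine.) Second, the link identification --- which is the real content of the proposition --- is not actually carried out: you describe what you would compute (the monodromy of the three roots, the braid word, the Alexander polynomial) and then explicitly invoke Rudolph's analysis for the conclusion. That is an outline of a strategy, not a proof; in particular, the step connecting the specific coefficients of this cubic to the braid word $(\sigma_1\sigma_2^{-1})^2$ is precisely the nontrivial part, and you have deferred it entirely.
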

\end{exa}

\section{Suspension singularities}

Let $f \colon (\R^4,0) \to (\R^2,0)$ be a real analytic function with isolated critical point and let $L_f=\pr{f}(0) \cap \Sp{3}$ be the link. Now, let $F \colon (\R^{4} \times \C,0) \cong (\R^{6},0) \to (\R^2,0)$ be the real analytic function defined by
\begin{equation*}
F(x_1,x_2,x_3,x_4,z)= f(x_1,x_2,x_3,x_4) + z^r \ ,
\end{equation*}
where $(x_1,x_2,x_3,x_4) \in \R^4$, $z \in \C$ and $r \in \N$, with $r \geq 2$.

Using the following result, we obtain that the real analytic map $F$ has isolated critical point:

\begin{prop}[\!{\!\cite[Proposition 1]{MR2922705}}]\label{singaisl}
Let $h \colon (\R^{n},0) \to (\R^2,0)$ be a real analytic germ and let $r \in \N$. The analytic germ  $H \colon (\R^{n} \times \C,0) \cong (\R^{n+2},0) \to (\R^2,0)$ defined for $(x_1, \ldots, x_n) \in \R^n$ and $z \in \C$ by
\begin{equation*}
H(x_1, \ldots, x_n, z)=h(x_1, \ldots, x_n) +z^r
\end{equation*}
has an isolated singularity at the origin if and only if $h$ has an isolated singularity at the origin.
\end{prop}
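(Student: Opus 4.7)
The plan is to analyze the Jacobian matrix of $H$ directly and show that the critical locus of $H$ near the origin coincides with $C_h \times \{0\}$, where $C_h$ is the critical locus of $h$. From this equivalence the statement is immediate: the origin is isolated in the critical set of $H$ if and only if it is isolated in the critical set of $h$.

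First I would write $z = u + iv$ and decompose $z^r = P(u,v) + iQ(u,v)$ into real and imaginary parts, and likewise write $h = (h_1, h_2)$ and $H = (h_1 + P,\ h_2 + Q)$. The Jacobian $DH$ at a point $(x,z)$ splits into the $2 \times n$ block $\bigl(\partial_{x_j} h_i\bigr)$ and the $2\times 2$ block
\begin{equation*}
J_z = \begin{pmatrix} \partial_u P & \partial_v P \\ \partial_u Q & \partial_v Q \end{pmatrix}.
\end{equation*}
Since $z \mapsto z^r$ is holomorphic, the Cauchy--Riemann equations give $J_z$ the form $\left(\begin{smallmatrix} a & -b \\ b & a \end{smallmatrix}\right)$ with $a+ib = r z^{r-1}$, so $\det J_z = r^2 |z|^{2r-2}$. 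This is the key observation.

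Next, the critical points of $H$ are exactly the points where $DH$ has rank strictly less than $2$. If $z \neq 0$ then $\det J_z \neq 0$, so $DH$ already has rank $2$ and $(x,z)$ cannot be critical. Hence every critical point of $H$ lies in the slice $\{z=0\}$. On this slice, because $r \geq 2$, all partial derivatives of $P$ and $Q$ in the variables $u,v$ vanish, so the last two columns of $DH$ are zero and the rank condition reduces to $\text{rank}\bigl(\partial_{x_j} h_i\bigr) < 2$, \ie to $(x_1,\dots,x_n)$ being a critical point of $h$. Therefore the critical set of $H$ in a neighbourhood of the origin is exactly $C_h \times \{0\}$.

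The main step is essentially the Cauchy--Riemann computation showing $\det J_z = r^2|z|^{2r-2}$; once this is in hand, the forcing of $z=0$ at critical points and the identification with $C_h$ are routine. No serious obstacle is expected, since the argument is entirely local and linear-algebraic; the only point requiring slight care is making sure the equivalence of ``isolated critical point'' holds in both directions, which follows because the set-theoretic identity of critical loci is exact (not just an inclusion) in a neighbourhood of the origin.
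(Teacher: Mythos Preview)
Your argument is correct and essentially the same as the paper's: both compute the Jacobian of $H$ as a block matrix, observe that the $2\times2$ block coming from $z\mapsto z^r$ has full rank precisely when $z\neq 0$, and deduce that the critical locus of $H$ coincides with $C_h\times\{0\}$. The only cosmetic difference is that the paper writes this block in $(z,\bar z)$ coordinates while you use real coordinates $(u,v)$ and the Cauchy--Riemann equations, and you package the two implications into the single identification of the critical set.
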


\begin{proof}
The Jacobian matrix $M$ of $h(x_1, \ldots, x_n)+z^r$ with respect to the coordinates $x_1, \ldots, x_n, z, \bar{z}$ is given by
\begin{equation*}
\begin{pmatrix}
\begin{matrix}
Dh(x_1, \ldots, x_n)
\end{matrix}
\left|
\begin{matrix}
\frac{1}{2}rz^{r-1}  & \frac{1}{2}r\bar{z}^{r-1} \\[7pt]
\frac{1}{2i}rz^{r-1} & -\frac{1}{2i} r\bar{z}^{r-1}\\
\end{matrix}
\right.
\end{pmatrix}
\end{equation*}
where $Dh(x_1, \ldots, x_n)$ is the Jacobian matrix of $h$ with respect to the coordinates $x_1, \ldots, x_n$.

Let $\mathcal{P} \colon \R^n \times \C \to \R^n$ be the projection defined by $\mathcal{P}(x_1, \ldots, x_n, z)=(x_1, \ldots, x_n)$.

If $h$ has an isolated singularity at the origin, then $Dh(x_1, \ldots, x_n)$ has rank $2$ in a neighbourhood $W$ of the origin except the origin. Let $(x_1, \ldots, x_n, z) \in \mathcal{P}^{-1}(W)$. If $(x_1, \ldots, x_n) \neq 0$, then $Dh(x_1, \ldots, x_n)$ has rank two. Otherwise $z \neq 0$ and the matrix
\begin{equation*}
\begin{pmatrix}
\frac{1}{2}rz^{r-1}  & \frac{1}{2}r\bar{z}^{r-1} \\[7pt]
\frac{1}{2i}rz^{r-1} & \frac{-1}{2i} r\bar{z}_{n+1}^{r-1}\\
\end{pmatrix}
\end{equation*}
has rank two. Then $h(x_1, \ldots, x_n)+z^r$ has rank two at each point of $\mathcal{P}^{-1}(W)\setminus \{0\}$.

If $h+z^r$ has an isolated singularity at the origin, then $M$ has rank $2$ in a neighbourhood $U$ of the origin except at the origin itself; in particular $M$ has rank $2$ at the points $(x_1, \ldots, x_n,0) \in U \setminus \{0\}$; then the matrix $Dh(x_1, \ldots, x_n, z)$ has rank $2$ in the neighbourhood $\mathcal{P}(U)$ of the origin except at the origin.
\end{proof}

Let $L_F= \Sp{5} \cap \pr{F}(0)$ denote the link of the singularity of $\pr{F}(0)$ at the origin. Let $\varepsilon > 0$ be such that any sphere $\Sp{5}_{\eta}$ with $0 < \eta \leq \varepsilon$ intersects transversely $\pr{F}(0)$. Let $\varepsilon^{\prime} > 0$ be such that for all $(x_1,x_2,x_3,x_4,z) \in \pr{F}(0)$ with $(x_1,x_2,x_3,x_4) \in \D^4_{\varepsilon^{\prime}}$ we have 
\begin{equation*}
|f(x_1,x_2,x_3,x_4)|^{1/r} < \varepsilon \ .
\end{equation*}
Let us consider the polydisc
\begin{equation*}
\BD{6} = \{(x_1,x_2,x_3,x_4,z)\, |\, (x_1,x_2,x_3,x_4) \in \D^4_{\varepsilon^{\prime}}, |z| \leq \varepsilon \} \ .
\end{equation*}

By \cite[Proposition~1.7 and Application~3.8]{durfee:neighalgs} the link $L_F$ is homeomorphic to the intersection $\pr{F}(0) \cap \partial \BD{6}$. In the sequel we will also denote this intersection by $L_F$.

Let $\calm{P} \colon L_F \to \Sp{3}_{\varepsilon'}$ be the projection defined by
\begin{equation*}
\calm{P}(x_1,x_2,x_3,x_4,z)=(x_1,x_2,x_3,x_4)
\end{equation*}
and let $L' = \pr{\calm{P}}(L_f)$. Given a point $(x_1,x_2,x_3,x_4,z) \in L_F \setminus L'$, we have that $f(x_1,x_2,x_3,x_4)+ z^r = 0$, \ie $-f(x_1,x_2,x_3,x_4)=z^r$. Let $\theta= \arg \left(-f(x_1,x_2,x_3,x_4)\right)$, then $(x_1,x_2,x_3,x_4,z)$ is of the form
\begin{equation*}
\left(x_1,x_2,x_3,x_4, |\!-\!\!f(x_1,x_2,x_3,x_4)|^{1/r} e^{i \frac{\theta + 2 \pi k}{r}} \right)
\end{equation*}
for some $k$ such that $0 \leq k \leq r-1$.

We use the Milnor fibration $\varphi_f \colon \Sp{3} \setminus L_f \to \Sp{1}$ to define a map $\varphi' \colon L_F \setminus L' \to \Sp{1}$. Let $\sigma = \arg \left(-\varphi_f(x_1,x_2,x_3,x_4) \right)$, then $\varphi'$ is defined by:
\begin{equation*}
\varphi'\left(x_1,x_2,x_3,x_4, |\!-\!\!f(x_1,x_2,x_3,x_4)|^{1/r} e^{i \frac{\theta + 2 \pi k}{r}}\right) = e^{i \frac{\sigma + 2 \pi k}{r}} \ ,
\end{equation*}
with $0 \leq k \leq r-1$ fixed.

Let us define $\rho_r \colon \C \to \C$ by $\rho_r(z)=z^r$, then we have the following result.

\begin{thm}\label{brcov}
Given the maps $\calm{P}$, $\varphi_f$, $\rho_r$ and $\varphi'$, we have that
\begin{enumerate}[1)]
\item  the following diagram commutes:\label{1}
\begin{equation}\label{dgcm}
\xymatrix{
L_F \setminus L' \ar[d]_{\varphi'} \ar[r]^{\calm{P}} & \Sp{3} \setminus L_f \ar[d]^{\varphi_f} \\
\Sp{1} \ar[r]^{-\rho_r} & \Sp{1} \\
}
\end{equation}
\item $\calm{P} \colon L_F \to \Sp{3}$ is a $r$-fold cyclic branched covering with ramification locus $L_f$ and the restriction\label{2}
\begin{equation*}
\calm{P} \colon L' \to L_f
\end{equation*}
is a homeomorphism.
 \end{enumerate}
\end{thm}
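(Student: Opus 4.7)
The plan is to handle the two items in turn. For item (1), I would verify commutativity by direct substitution at a generic point $(x_1,x_2,x_3,x_4,z) \in L_F \setminus L'$. Using the parametrisation stated just before the theorem, $z = |{-f(x)}|^{1/r} e^{i(\theta+2\pi k)/r}$ for some $0 \le k \le r-1$ with $\theta = \arg(-f(x))$. The right-then-down composite $\varphi_f \circ \calm{P}$ returns $\varphi_f(x)$ immediately. For the down-then-right composite, I compute $\rho_r(e^{i(\sigma+2\pi k)/r}) = e^{i\sigma}$ since $e^{2\pi i k} = 1$; because $\varphi_f(x) \in \Sp{1}$, the identity $\sigma = \arg(-\varphi_f(x))$ forces $e^{i\sigma} = -\varphi_f(x)$, and the overall minus sign in $-\rho_r$ restores $\varphi_f(x)$. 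Thus both paths agree.

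For item (2), I first identify $L'$ explicitly: if $(x,z) \in L_F$ with $f(x) = 0$, then $F(x,z) = z^r = 0$, so $z = 0$. Hence $L' = L_f \times \{0\}$ and $\calm{P}|_{L'}$ is visibly a homeomorphism onto $L_f$. Over any $x \in \Sp{3}_{\varepsilon'} \setminus L_f$, the equation $z^r = -f(x) \neq 0$ has exactly $r$ simple complex roots, so $\calm{P}$ is set-theoretically $r$-to-one there. To promote this count to a cyclic branched cover structure, I would exhibit the deck group as the $\Z{r}$-action $(x,z) \mapsto (x, e^{2\pi i/r} z)$, which preserves $\pr{F}(0)$ and the polydisc $\BD{6}$, restricts to an action on $L_F$ that fixes $L'$ pointwise and cyclically permutes every $\calm{P}$-fibre outside $L'$. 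Local triviality of $\calm{P}$ on $L_F \setminus L'$ then follows from the implicit function theorem applied to $z^r + f(x) = 0$, since $\partial(z^r)/\partial z = r z^{r-1}$ is non-vanishing where $z \ne 0$.

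I expect the main technical point to be the local branched-cover model along $L'$: one must check that, in a tubular neighbourhood of $L_f$ in $\Sp{3}_{\varepsilon'}$ and the corresponding neighbourhood of $L'$ in $L_F$, the projection $\calm{P}$ is equivalent to the standard $r$-fold cyclic cover of a disc ramified at its centre, smoothly parametrised along $L_f$. This should follow from the real Milnor fibration set-up: on a collar of $L_f$ inside $\Sp{3}_{\varepsilon'}$, the map $f$ restricts to a submersion onto a punctured disc, so $-f$ supplies a local complex coordinate transverse to $L_f$, and $z^r = -f$ is then literally the standard $r$-fold branched-cover model in that coordinate. Combined with the identification of $L_F$ with $\pr{F}(0) \cap \partial \BD{6}$ already invoked from Durfee, this completes the branched-cover description.
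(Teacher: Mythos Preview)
Your argument for part (1) is essentially identical to the paper's: both compute the two compositions at a generic point and match them to $\varphi_f(x)$.

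For part (2) you take a genuinely different route. The paper proves that the commutative square \eqref{dgcm} is a \emph{pull-back} square: it writes down the fibre product $Q=\{(x,\lambda)\in(\Sp{3}\setminus L_f)\times\Sp{1}\mid \varphi_f(x)=-\lambda^r\}$ and exhibits mutually inverse maps between $L_F\setminus L'$ and $Q$; since $-\rho_r$ is an $r$-fold cyclic covering of $\Sp{1}$, its pull-back $\calm{P}$ is automatically an $r$-fold cyclic covering of $\Sp{3}\setminus L_f$, and the single-point fibre over $L_f$ then yields the branched statement. Your approach instead builds the covering structure by hand: you identify $L'=L_f\times\{0\}$, count fibres, exhibit the $\Z{r}$ deck action $(x,z)\mapsto(x,e^{2\pi i/r}z)$, invoke the implicit function theorem for local triviality away from $L'$, and finally verify the standard branched model along $L'$ using $-f$ as a transverse coordinate. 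Both arguments are correct. The paper's pull-back argument is shorter and more conceptual, and ties the covering directly to the Milnor fibration $\varphi_f$; your argument is more elementary and has the advantage of making the deck group explicit and of addressing the local branched model near $L'$ more carefully than the paper does.
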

\begin{proof}
Let us prove \ref{1}). Let $(x_1,x_2,x_3,x_4,z) \in L_F \setminus L'$. Then
\begin{equation*}
0 \neq z^r=-f(x_1,x_2,x_3,x_4)
\end{equation*}
and $(x_1,x_2,x_3,x_4,z) = \left(x_1,x_2,x_3,x_4, |\!-\!\!f(x_1,x_2,x_3,x_4)|^{1/r} e^{i \frac{\theta + 2 \pi k'}{r}} \right)$ with $\theta= \\ \arg(-f(x_1,x_2,x_3,x_4))$ and some $k'$ such that $0 \leq k' \leq r-1$. Then,
\begin{align*}
\varphi_f \left(\calm{P}(x_1,x_2,x_3,x_4,z) \right) &=  \varphi_f \left( \calm{P} \left( x_1,x_2,x_3,x_4, |\!-\!\!f(x_1,x_2,x_3,x_4)|^{1/r} e^{i \frac{\theta + 2 \pi k'}{r}} \right) \right) \\
 & = \varphi_f (x_1,x_2,x_3,x_4) \ .
\end{align*}
On the other hand, 
\begin{align*}
-\rho_r & \left( \varphi' \left( x_1,x_2,x_3,x_4, |\!-\!\!f(x_1,x_2,x_3,x_4)|^{1/r} e^{i \frac{\theta + 2 \pi k'}{r}} \right) \right) \\
& \quad =  -\rho_r \left( e^{i \frac{\sigma + 2 \pi k'}{r}} \right) \\
& \quad = \varphi_f(x_1,x_2,x_3,x_4) \ ,
\end{align*}
where $\sigma=\arg(-\varphi_f(x_1,x_2,x_3,x_4))$.

Hence $\varphi_f\left(\calm{P}(x_1,x_2,x_3,x_4,z)\right)=-\rho_r \left(\varphi'(x_1,x_2,x_3,x_4,z)\right)$.
 
Now, in order to prove \ref{2}), first we prove that diagram \eqref{dgcm} is a pull-back diagram.
 
Let $Q$ be the pull-back of $\varphi_f$ by $-\rho_r$ defined by
\begin{align}
Q &= \{(x_1,x_2,x_3,x_4, \lambda) \in (\Sp{3} \setminus L_f) \times  \Sp{1} \mid \varphi_f(x_1,x_2,x_3,x_4) = -\rho_r(\lambda)\} \notag \\
&= \{(x_1,x_2,x_3,x_4, \lambda) \in (\Sp{3} \setminus L_f) \times  \Sp{1} \mid \varphi_f(x_1,x_2,x_3,x_4) = -\lambda^r \} \label{eq:pull-back}
\end{align}

Then, by the universal property of the pull-back, there exist a unique map $p \colon (L_F \setminus L') \to Q$ such that the following diagram commutes:
\begin{equation*}
\xymatrix{
 L_F \setminus L' \ar@/_/[dddr]_{\varphi'} \ar[dr]^-{p} \ar@/^/[drrr]^{\calm{P}} & & &\\
                & Q \ar[dd]^{\nu} \ar[rr]^-{\mu}           & & \Sp{3} \setminus L_f \ar[dd]^{\varphi_f} \\
                &                                                  & & \\
                & \Sp{1} \ar[rr]^{-\rho_r}                   & & \Sp{1} \\
}
\end{equation*}
where $\nu$ is the projection on the fifth coordinate, $\mu$ is the projection on the first four coordinates and $p \colon (L_F \setminus L') \to Q$ is defined by
\begin{equation*}
p(x_1,x_2,x_3,x_4,z) = \left( x_1,x_2,x_3,x_4, \varphi'(x_1,x_2,x_3,x_4,z) \right) \ .
\end{equation*}
Let us define $q \colon Q \to (L_F \setminus L')$. By \eqref{eq:pull-back} any $(x_1,x_2,x_3,x_4,\lambda) \in Q$ is of the form
\begin{equation*}
(x_1,x_2,x_3,x_4,e^{i\frac{\sigma+2\pi k'}{r}})
\end{equation*}
where $\sigma=\arg(-\varphi_f(x_1,x_2,x_3,x_4))$ and for some $k'$ such that $0\leq k'\leq r-1$. Then $q$ is
given by
\begin{equation*}
q(x_1,x_2,x_3,x_4,e^{i\frac{\sigma+2\pi k'}{r}}) = \left( x_1,x_2,x_3,x_4, |\!-\!\!f(x_1,x_2,x_3,x_4)|^{1/r}e^{i \frac{\theta+2\pi k'}{r}} \right)
\end{equation*}
where $\theta=\arg(-f(x_1,x_2,x_3,x_4))$. Then $q$ is the inverse map of $p$ and $L_F \setminus L'$ is diffeomorphic to $Q$.

Notice that diagram \eqref{dgcm} can also be seen as $\calm{P}$ being the pull-back of the $r$- fold cyclic covering $-\rho_r$ by $\varphi_f$, then $\calm{P}$ is itself a $r$-fold cyclic covering. 

Now, let $(x_1,x_2,x_3,x_4) \in L_f$, then $\pr{\calm{P}}(x_1,x_2,x_3,x_4)=\{(x_1,x_2,x_3,x_4,0) \in L_F \}$, \ie $\pr{\calm{P}}(x_1,x_2,x_3,x_4)$ consists of only one point. Then $\calm{P}$ from $L_F$ to $\Sp{3}$ is a $r$-fold cyclic branched covering with ramification locus $L_f$. 
\end{proof}

\section{Branched coverings of knots}

As is mentioned in \cite[\S~1]{MR2177068}, given a knot $(\Sp{3},K)$ and an integer $r \geq 2$, the $3$-manifold $M(K;r)$ which is the total space of the $r$-fold cyclic cover of $\Sp{3}$ branched along $K$ is called the \textbf{$r$-fold cyclic branched cover} of $K$.

By Theorem~\ref{brcov}, we have:

\begin{cor}\label{linkcov}
The link $L_F$ is the $r$-fold cyclic branched cover of $L_f$.
\end{cor}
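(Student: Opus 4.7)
The corollary is essentially a restatement of Theorem~\ref{brcov}(2) in the terminology introduced at the start of this section, so the plan is simply to line up the definitions and invoke the theorem. First I would recall that by the real Milnor fibration discussion in Section~2, the set $L_f = \pr{f}(0) \cap \Sp{3}_\varepsilon$ is indeed a knot in $\Sp{3}$, so the pair $(\Sp{3}, L_f)$ is of the form to which the definition of $M(K;r)$ applies.

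Next I would apply Theorem~\ref{brcov}(2) directly. That theorem produces a map $\calm{P} \colon L_F \to \Sp{3}$ and asserts that it is an $r$-fold cyclic branched covering with ramification locus exactly $L_f$. Comparing this with the definition recalled at the beginning of this section, namely that $M(K;r)$ is by definition the total space of the $r$-fold cyclic cover of $\Sp{3}$ branched along $K$, we see that $L_F$ together with $\calm{P}$ is precisely a realisation of $M(L_f; r)$.

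To close the argument rigorously I would note that the $r$-fold cyclic branched cover of a knot in $\Sp{3}$ is unique up to homeomorphism (it is determined by the kernel of the abelianisation $\pi_1(\Sp{3}\setminus L_f) \to \Z{r}$, which is canonical), so the identification $L_F \cong M(L_f;r)$ is unambiguous. Since Theorem~\ref{brcov} has already done all the geometric work (both the covering property away from $L'$, via the pull-back diagram~\eqref{dgcm}, and the one-point preimage over each point of $L_f$), there is really no additional obstacle here; the proof is a two-line deduction, and the only thing to be careful about is that the terminology of this section matches the conclusion of the previous theorem.
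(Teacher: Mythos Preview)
Your proposal is correct and matches the paper's approach exactly: the paper presents this corollary with no written proof at all, simply prefacing it with ``By Theorem~\ref{brcov}, we have,'' so your deduction---invoking Theorem~\ref{brcov}(2) and lining it up with the definition of $M(K;r)$---is precisely what is intended. Your extra remark on uniqueness of the cyclic branched cover is a reasonable clarification but not strictly needed.
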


%

\begin{exa}
Let $g$ be the polynomial map given by Proposition~\ref{polRudolph}. Let $G \colon (\R^4 \times \C,0) \to (\R^2,0) \cong (\C,0)$ be the polynomial given by
\begin{equation*}
G(x_1,x_2,x_3,x_4, z)=g(x_1,x_2,x_3,x_4) + z^r \ , r \in \N \ .
\end{equation*}
Then, by Corollary~\ref{linkcov}, $L_G$ is the $r$-fold cyclic branched cover of the figure-eight knot. By \cite{MR1084321,MR2177068}, the $3$-fold cyclic branched cover of the figure eight knot is the Hantzsche-Wendt Euclidean manifold and for $r > 3$, the link $L_G$ is an hyperbolic manifold.
\end{exa}

\begin{exa}
Perron's construction also gives the Borromean rings as link of a real isolated singularity of a polynomial function from $\R^4$ to $\R^2$ (see \cite{MR643562}). Let $f \colon (\R^4,0) \to (\R^2,0)$ be such polynomial map. Now, let $F \colon (\R^4 \times \C,0) \to (\R^2,0) \cong (\C,0)$ be the polynomial given by
\begin{equation}
F(x_1,x_2,x_3,x_4, z)=f(x_1,x_2,x_3,x_4) + z^r \ , r \in \N \ .
\end{equation}
Then $L_F$ is the $r$-fold cyclic branched cover of the Borromean rings. By \cite{MR1084321,MR2177068}, the $2$-fold cyclic branched cover of the Borromean rings is the Hantzsche-Wendt Euclidean manifold and for $r \geq 3$, the link $L_F$ is an hyperbolic manifold.
\end{exa}

By \cite{MR0365592}, it is known that examples of non-trivial singularities appear for functions from $\R^4$ to $\R^2$ and it would be interesting to have more concrete examples, however, it is hard enough to determine if a knot is fibred or not; other problem is to find if there is possible to realise the knot with only one real analytic function from $\R^4$ to $\R^2$ with isolated critical point and then to find the equation.

In recent years there have been found some families of real polynomial functions that can give examples in this particular case, as polar weighted homogeneous polynomials (see \cite{Cis09, Oka08}), mixed functions (\!\!\cite{oka-2009}), etc. It remains to study what kind of knots appear for functions of these kinds.


\end{document}